\newtheorem{theorem}{Theorem}
\newtheorem{lemma}{Lemma}
\theoremstyle{remark}
\theoremstyle{definition}
\newcommand{\D}[1]{\mathop{\mathrm{d}#1}}
\DeclareMathOperator*{\Var}{Var}
\DeclareMathOperator{\fp}{fp}
\author[Aksheytha Chelikavada and Hugo Panzo]{Aksheytha Chelikavada\affiliationmark{1}
  \and Hugo Panzo\affiliationmark{2}
  }
\title{Limit theorems for fixed point biased permutations avoiding a pattern of length three}
\affiliation{
  Department of Computing and Digital Media, DePaul University, Chicago, IL, USA\\
  Department of Mathematics and Statistics, Saint Louis University, St.~Louis, MO, USA}
\keywords{Fixed point, pattern avoidance, limit theorem, phase transition, random permutation, singularity analysis.}
\begin{document}

\publicationdata{vol. 28:2}{2026}{8}{10.46298/dmtcs.15388}{2025-03-18; None}{2026-01-08}

\maketitle

\begin{abstract} \bigskip
We prove limit theorems for the number of fixed points occurring in a random pattern-avoiding permutation distributed according to a one-parameter family of biased distributions. The bias parameter exponentially tilts the distribution towards favoring permutations with more or fewer fixed points than is typical under the uniform distribution. One case we study features a phase transition where the limiting distribution changes abruptly from negative binomial to Rayleigh to normal depending on the bias parameter.
\end{abstract}


\section{Introduction}

Determining the limiting distribution of the number of fixed points in a uniformly random permutation, also known as the problem of coincidences, is a classic problem in probability first inspired by the French card game \emph{jeu de treize}. The problem was solved in the early 1700's in a series of letters between Pierre R\'{e}mond de Montmort and Nikolaus Bernoulli where it was shown that the number of fixed points converges in distribution to a Poisson random variable with parameter $1$; see \cite{coincidences} for a historical account.

In this paper we generalize this classic result in two directions simultaneously. Instead of random permutations which are uniformly distributed, we consider: 
\begin{enumerate}
	\item Random permutations that are distributed according to a certain Gibbsian one-parameter family of nonuniform biased distributions, and
	\item Random permutations that avoid a pattern of length three. 
\end{enumerate}
In most cases we are able to prove a non-Poissonian limit theorem in contrast to the classical result of Montmort and Bernoulli. Indeed, in one case we uncover a phase transition where the limiting distribution depends in a qualitative way on the bias parameter. Our main results can be seen as complementary to some earlier work where the two generalizations had been considered separately; see Miner and Pak \cite{Miner_Pak}, Mukherjee \cite{Mukherjee}, and Hoffman, Rizzolo, and Slivken \cite{HRS_1, HRS_2}. Moreover, the observed phase transition is a manifestation of a more general phenomenon related to composition schemes; see \cite{composition_schemes,q_enumeration} and references therein. We also mention the recent preprint \cite{Fulman}, which proves Poissonian limit theorems for the number of fixed points occurring in several non-Gibbsian models of nonuniform permutations.

Further motivation for our work comes from the specific nature of the one-parameter family of biased distributions that we study. This family of Gibbs measures is analogous to the classical Ewens and Mallows distributions with the role of cycles and inversions being replaced by fixed points. More precisely, if $\sigma$ is a permutation and $\fp \sigma$ denotes the number of fixed points that it has, then the probability of picking $\sigma$ under this biased distribution with bias parameter $q>0$ is proportional to $q^{\fp \sigma}$. Under this \emph{fixed point biased} distribution, permutations with many fixed points are favored when $q>1$ and those with few fixed points are favored when $0<q<1$. 

Since permutations that have many fixed points are in some sense less disordered than permutations that have few fixed points, the fixed point biased distribution allows us to introduce a \emph{presortedness bias} that is of interest to computer scientists who study sorting algorithms; see \cite{presortedness} and references therein. Another similarly motivated distribution that has appeared in the literature recently is the \emph{record biased} distribution introduced in \cite{record_biased} and further studied in \cite{Jones, Corsini, Pinsky_2, record_biased2}; see also \cite{descent_biased} for the aptly named \emph{descent biased} distribution. Moreover, the additional aspect of pattern avoidance that we consider is not merely academic---it is also related to sorting algorithms. Indeed, it is well known that a permutation can be sorted with the linear time \emph{stack sort} algorithm if and only if it avoids the pattern $231$; see Section 8.2 of \cite{permutation_text}. 

The remainder of the paper is organized as follows. In Sections \ref{sec:fpbiased} and \ref{sec:fpbiased_avoiding}, we formally define the fixed point biased distribution and also recall the notion of pattern avoidance for permutations. Our main results are listed in Section \ref{sec:main_results}. In Section \ref{sec:normalization} we prove a key lemma that will be used to prove several of our main results. The proofs of the main results are given in Section \ref{sec:proofs}. Finally, we highlight some open questions as well as topics for further study in Section \ref{sec:future}.

\subsection{Fixed point biased permutations}\label{sec:fpbiased}

Let $n\in\mathbb{N}$ and denote by $\mathcal{S}_n$ the set of all permutations of $\{1,2,\dots,n\}$. For any bias parameter $q>0$, we define $\mathbb{P}_n^q$ to be the fixed point biased probability measure on $\mathcal{S}_n$ analogous to the Ewens and Mallows measures. More precisely, we have
\begin{equation}\label{eq:biased}
	\mathbb{P}_n^q(S):=\frac{1}{Z_n(q)}\sum_{\sigma\in S}q^{\fp \sigma},~ S\subset\mathcal{S}_n,
\end{equation}
where $Z_n(q)$ is the normalization constant given by
\begin{equation}\label{eq:plain_normalization}
	Z_n(q):=\sum_{\sigma\in \mathcal{S}_n}q^{\fp \sigma}.
\end{equation}
From \eqref{eq:biased} it is clear that $\mathbb{P}_n^q$ favors permutations with many fixed points when $q>1$ and it favors permutations with few fixed points when $0<q<1$. 

Alternatively, we can express $\mathbb{P}_n^q$ in terms of the uniform measure $\mathbb{P}_n$ on $\mathcal{S}_n$ and its expectation $\mathbb{E}_n$ via the mutual absolute continuity relation
\begin{equation}\label{eq:RN}
	\D{\mathbb{P}_n^q}=\frac{q^{\fp\Pi}}{\mathbb{E}_n\left[q^{\fp\Pi}\right]}\D{\mathbb{P}_n},
\end{equation}
where $\Pi$ is a permutation distributed according to the corresponding measure. Fittingly, we denote expectation under $\mathbb{P}_n^q$ by $\mathbb{E}_n^q$. From either \eqref{eq:biased} or \eqref{eq:RN}, it follows that $\mathbb{P}_n^1=\mathbb{P}_n$, and we suppress the $1$ for notational convenience.

A related distribution is the so-called \emph{Mallows model with Hamming distance}; see \cite{Mallows_Hamming}. When the central permutation in that model is the identity, then the distribution is equivalent to \eqref{eq:biased} with $\fp \sigma$ replaced by $n-\fp \sigma$. In other words, the bias is determined by the number of nonfixed points instead of fixed points. The focus of \cite{Mallows_Hamming} is on approximate sampling algorithms, though, and no limit theorems of the kind studied in this paper are proved.

\subsection{Fixed point biased pattern-avoiding permutations}\label{sec:fpbiased_avoiding}

Our main results consider random permutations that, in addition to being biased by their number of fixed points, also avoid a single pattern of length three. Here we recall the notion of pattern avoidance for permutations. Suppose that $\sigma=\sigma_1\sigma_2\cdots\sigma_n\in\mathcal{S}_n$ and $\tau=\tau_1\tau_2\cdots\tau_m\in\mathcal{S}_m$ are permutations with $2\leq m\leq n$. We say that $\sigma$ contains $\tau$ as a pattern if there exists a subsequence $1\leq i_1<i_2<\cdots<i_m\leq n$ such that for all $1\leq j,k\leq m$, the inequality $\sigma_{i_j}<\sigma_{i_k}$ holds if and only if the inequality $\tau_j<\tau_k$ holds. In other words, $\sigma$ contains $\tau$ as a pattern if and only if $\sigma$ contains a subsequence that is in the same relative order as $\tau$. If $\sigma$ does not contain $\tau$ as a pattern, then we say that $\sigma$ avoids $\tau$. For example, $31245$ contains $213$ as a pattern, while $53421$ avoids $213$. In case $\tau\in\mathcal{S}_m$ with $m>n$, then we say that every $\sigma\in\mathcal{S}_n$ avoids $\tau$. We remark that a permutation that avoids $123\dots n$ has at most $n-1$ fixed points.

If $\tau$ is a permutation, then we denote by $\mathcal{S}_n(\tau)$ the subset of $\mathcal{S}_n$ that avoids $\tau$. Now for any bias parameter $q>0$, we can define the fixed point biased probability measure on $\mathcal{S}_n(\tau)$ by
\begin{equation}\label{eq:biased_avoiding}
	\mathbb{P}_n^{q,\tau}(S):=\frac{1}{Z_n(q,\tau)}\sum_{\sigma\in S}q^{\fp \sigma},~ S\subset\mathcal{S}_n(\tau),
\end{equation}
where $Z_n(q,\tau)$ is the normalization constant given by
\begin{equation}\label{eq:normalization}
	Z_n(q,\tau):=\sum_{\sigma\in \mathcal{S}_n(\tau)}q^{\fp \sigma}.
\end{equation}
Note that $Z_n(z,\tau)$ is well-defined for any $z\in\mathbb{C}$. Indeed, considering the normalization constant with a complex parameter will prove useful.

With a slight abuse of notation, we use $\mathbb{P}_n^\tau$ to denote the uniform measure on $\mathcal{S}_n(\tau)$. There should be no ambiguity between $\mathbb{P}_n^\tau$ and $\mathbb{P}_n^q$ since we will always use lowercase Greek letters for the pattern and lowercase Roman letters for the bias parameter. This leads to the mutual absolute continuity relation
\begin{equation}\label{eq:RN_avoiding}
	\D{\mathbb{P}_n^{q,\tau}}=\frac{q^{\fp\Pi}}{\mathbb{E}_n^\tau\left[q^{\fp\Pi}\right]}\D{\mathbb{P}_n^\tau},
\end{equation}
where $\Pi$ is a permutation distributed according to the corresponding measure. Fittingly, we denote expectation under $\mathbb{P}_n^{q,\tau}$ by $\mathbb{E}_n^{q,\tau}$. From either \eqref{eq:biased_avoiding} or \eqref{eq:RN_avoiding}, it follows that $\mathbb{P}_n^{1,\tau}=\mathbb{P}_n^\tau$, and we suppress the $1$ for notational convenience.

There is also a relation between the normalization constants appearing in \eqref{eq:normalization} and \eqref{eq:RN_avoiding} that will be exploited later on. More specifically, we have
\begin{equation}\label{eq:normalization_id}
	\mathbb{E}^{\tau}_n\left[q^{\fp \Pi}\right]=\frac{\displaystyle\sum_{\sigma\in \mathcal{S}_n(\tau)}q^{\fp \sigma}}{\displaystyle\left|\mathcal{S}_n(\tau)\right|}=\frac{Z_n(q,\tau)}{Z_n(1,\tau)}.
\end{equation}


\section{Main results}\label{sec:main_results}

Our first result gives a limit theorem for the number of fixed points in a random permutation that is distributed according to the probability measure $\mathbb{P}_n^q$ defined in \eqref{eq:biased}. It can be seen as a generalization of the classical limit theorem for uniformly random permutations described in the Introduction, whereby the number of fixed points converges in distribution to a \texttt{Poisson}$(1)$ random variable. In what follows, we write $\stackrel{d}{\to}$ to indicate convergence in distribution.

\begin{theorem}\label{thm:classic}
	Let $q>0$. Then as $n\to\infty$ we have
	\[
	\fp \Pi~\text{under}~\mathbb{P}_n^q\stackrel{d}{\to}\emph{\texttt{Poisson}}(q).
	\]
\end{theorem}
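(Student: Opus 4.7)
The plan is to prove Theorem \ref{thm:classic} via the continuity theorem for probability generating functions (PGFs). The key observation is that the biased measure \eqref{eq:biased} has a particularly simple effect on the PGF of $\fp \Pi$, which allows us to reduce directly to the classical $q=1$ case.

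First, I would recall (or derive) the classical identity for the PGF of $\fp \Pi$ under the uniform measure $\mathbb{P}_n$. Using the fact that the number of permutations in $\mathcal{S}_n$ with exactly $k$ fixed points equals $\binom{n}{k} D_{n-k}$, where $D_m$ is the number of derangements, together with the formula $D_m = m!\sum_{j=0}^m (-1)^j/j!$, a short computation yields
\[
G_n(z) := \mathbb{E}_n\!\left[z^{\fp \Pi}\right] = \sum_{k=0}^n \frac{(z-1)^k}{k!}.
\]
In particular, $G_n(z) \to e^{z-1}$ as $n \to \infty$ for every $z \in \mathbb{C}$, and this is just the classical Montmort--Bernoulli theorem restated at the level of generating functions.

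Next, I would apply the absolute continuity relation \eqref{eq:RN} to express the PGF of $\fp \Pi$ under $\mathbb{P}_n^q$ in terms of $G_n$. Concretely,
\[
\mathbb{E}_n^q\!\left[z^{\fp \Pi}\right] = \frac{\mathbb{E}_n\!\left[(qz)^{\fp \Pi}\right]}{\mathbb{E}_n\!\left[q^{\fp \Pi}\right]} = \frac{G_n(qz)}{G_n(q)}.
\]
Letting $n \to \infty$ in both numerator and denominator (both are finite sums whose pointwise limit is an entire function), we obtain
\[
\lim_{n \to \infty} \mathbb{E}_n^q\!\left[z^{\fp \Pi}\right] = \frac{e^{qz-1}}{e^{q-1}} = e^{q(z-1)},
\]
which is precisely the PGF of a \texttt{Poisson}$(q)$ random variable.

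Finally, convergence of PGFs on a neighborhood of $z = 1$ implies convergence in distribution (indeed, convergence of all factorial moments), which is the desired conclusion. There is no real obstacle here: the whole argument hinges on the elementary closed form for $G_n(z)$, and the bias parameter $q$ enters only as a rescaling of the argument in both numerator and denominator. The only point deserving a line of care is that $G_n(q) > 0$ for all $n$ and all $q > 0$, which is clear from \eqref{eq:plain_normalization} since $Z_n(q) = n!\, G_n(q)$ is a sum of positive terms.
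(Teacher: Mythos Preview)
Your argument is correct and follows the same overall strategy as the paper: express the PGF of $\fp\Pi$ under $\mathbb{P}_n^q$ as the ratio $G_n(qz)/G_n(q)$, compute the limit, and invoke the continuity theorem for PGFs.

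The one genuine difference is in how the limit of $G_n$ is obtained. The paper works with the exponential generating function $H(t,u)=e^{t(u-1)}/(1-t)$, notes that $G_n(u)=[t^n]H(t,u)$, and then applies singularity analysis (Theorem~VI.4 of \cite{Flajolet}) to the simple pole at $t=1$ to conclude $[t^n]H(t,u)\sim e^{u-1}$. You instead use the elementary closed form $G_n(z)=\sum_{k=0}^n (z-1)^k/k!$, which exhibits $G_n(z)$ directly as the $n$th partial sum of the Taylor series of $e^{z-1}$, so the limit is immediate. Your route is more self-contained and avoids the analytic combinatorics machinery; the paper's route is consistent with the singularity-analysis framework used throughout for the pattern-avoiding cases. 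Either way the substance is the same. One minor remark: your phrase ``convergence of PGFs on a neighborhood of $z=1$'' is fine here since any such neighborhood contains points in the open unit disk, but the cleaner statement (as in the paper, via Theorem~IX.1 of \cite{Flajolet}) is pointwise convergence on a set with a limit point inside the open unit disk.
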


The remainder of our results are limit theorems for the number of fixed points under the probability measures $\mathbb{P}_n^{q,\tau}$ defined in \eqref{eq:biased_avoiding} which involve pattern avoidance. They can be seen as generalizations of some results of Miner and Pak \cite{Miner_Pak} and Hoffman, Rizzolo, and Slivken \cite{HRS_1,HRS_2} for pattern-avoiding permutations under the uniform measure $\mathbb{P}_n^\tau$. Our results can be grouped into two classes depending on the pattern to be avoided: $\tau\in\{123\}$ and $\tau\in\{132,321,213\}$. Along with the class $\tau\in\{231,312\}$ discussed in Section \ref{sec:231_312}, for which we do not have any results, what sets these pattern classes apart is the number of fixed points occurring in their patterns. 

\begin{theorem}\label{thm:Bernoulli}
	Let $q>0$ and let $A$ and $B$ be independent \emph{\texttt{Bernoulli}}$(\frac{q}{3+q})$ random variables. Then as $n\to\infty$ we have
	\[
	\fp \Pi~\text{under}~\mathbb{P}_n^{q,123} \stackrel{d}{\to} A+B.
	\]
\end{theorem}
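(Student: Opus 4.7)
The starting point is an elementary structural observation: every $\sigma \in \mathcal{S}_n(123)$ has at most two fixed points, because the fixed-point set of a permutation is automatically an increasing subsequence when read left to right, and three such points would already form a $123$ pattern. Thus $\fp\Pi \in \{0,1,2\}$ under $\mathbb{P}_n^{q,123}$, and writing $N_n(k) := |\{\sigma \in \mathcal{S}_n(123) : \fp\sigma = k\}|$ and $C_n := |\mathcal{S}_n(123)|$, the normalization $Z_n(q,123)$ is the quadratic polynomial $N_n(0) + q N_n(1) + q^2 N_n(2)$.

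Since the probability generating function of $A+B$ is $\bigl((3+qs)/(3+q)\bigr)^2$, and by definition $\mathbb{E}_n^{q,123}[s^{\fp\Pi}] = Z_n(qs,123)/Z_n(q,123)$, the desired convergence in distribution is equivalent to
\[
\frac{Z_n(q,123)}{C_n} \longrightarrow \frac{(3+q)^2}{16} \quad \text{for every } q > 0,
\]
or, matching coefficients in $q$, to the three limits $N_n(k)/C_n \to \binom{2}{k}(1/4)^k(3/4)^{2-k}$. In words, one needs the number of fixed points under the uniform measure $\mathbb{P}_n^{123}$ to converge to $\mathrm{Binomial}(2,1/4)$; since the support is finite, this upgrades at once to convergence in distribution.

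To obtain the required asymptotic, I would perform singularity analysis on the bivariate generating function $F(z,u) = \sum_{n\geq 0} z^n Z_n(u,123)$. A closed form for $F(z,u)$ is classical (Elizalde and collaborators), and its dominant singularity at $z = 1/4$ is inherited from the Catalan generating function $C(z) = (1-\sqrt{1-4z})/(2z)$; a standard transfer-theorem extraction should yield $Z_n(q,123) \sim (3+q)^2 C_n/16$, delivering the three required ratios in one stroke. Plausibly, this is exactly the type of asymptotic packaged as the key lemma announced for Section~\ref{sec:normalization}, which explains why that lemma is advertised as the common engine behind several of the main results.

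The main technical obstacle is the explicit identification of the constant $(3+q)^2/16$: while the at-most-two-fixed-points bound shows that $Z_n(q,123)/C_n$ is always a quadratic in $q$ with coefficients in $[0,1]$, pinning down the three numerical limits requires either a clean closed form for $F(z,q)$ together with careful singularity analysis or an equivalent combinatorial decomposition. A viable alternative to analytic methods is to enumerate $N_n(2)$ directly by summing over pairs of fixed-point positions $i<j$, exploiting the structurally rigid split of the non-fixed elements forced by $123$-avoidance around any fixed point, and then to recover the remaining two ratios from $N_n(0) + N_n(1) + N_n(2) = C_n$ together with an inclusion--exclusion computation for $N_n(0)$.
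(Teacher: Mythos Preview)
Your reduction is exactly the paper's: both arguments note that $\fp\Pi\le 2$ on $\mathcal{S}_n(123)$, so the tilted probability generating function $\mathbb{E}_n^{q,123}[z^{\fp\Pi}]=Z_n(qz,123)/Z_n(q,123)$ converges as soon as the uniform ($q=1$) limit is known, and both identify that limit as $\mathrm{Binomial}(2,\tfrac14)$. The divergence is in how that uniform limit is obtained.

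You conjecture that the needed asymptotic $Z_n(q,123)/C_n\to(3+q)^2/16$ is the content of the lemma in Section~\ref{sec:normalization}. It is not: Lemma~\ref{lem:normalization} treats only $\tau\in\{132,321,213\}$, the patterns for which Elizalde's algebraic bivariate generating function \eqref{eq:gen_function} is available. The paper says explicitly, at the start of its proof of Theorem~\ref{thm:Bernoulli}, that no usable generating function is in hand for $\tau=123$; instead it imports the $q=1$ limit as a black box from Hoffman--Rizzolo--Slivken \cite[Theorem~1.1]{HRS_1}. Once that is granted, the tilting step is the two-line computation with \eqref{eq:Bernoulli_pgf}, using that $x\mapsto q^x$ and $x\mapsto(zq)^x$ are bounded on the finite support $\{0,1,2\}$.

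So your proposal has the right architecture but leaves the one nontrivial ingredient unproved: you offer ``singularity analysis on a BGF whose closed form I have not written down'' or ``a direct enumeration I have not carried out.'' The first is not as routine as you suggest, since the $123$ case lacks the simple algebraic BGF enjoyed by $\{132,321,213\}$; this is precisely why the paper falls back on \cite{HRS_1}. Your combinatorial alternative is plausible in outline but would amount to reproving the HRS result from scratch.
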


The next three theorems concern permutations avoiding a single pattern $\tau$ from the set $\{132,321,213\}$. Unlike the previous result, the limiting behavior of the number of fixed points depends on the parameter $q>0$ in a qualitative manner and features a phase transition at $q=3$. In the subcritical phase $q\in(0,3)$, we have convergence to the negative binomial distribution without the need for any scaling. Recall that for $r\in\mathbb{N}$ and $p\in(0,1)$, a \texttt{NegativeBinomial}$(r,p)$ random variable $N$ has a probability mass function given by
\begin{equation}\label{eq:neg_binom}
	\mathbb{P}(N=k)=\binom{k+r-1}{k}(1-p)^k p^r,~k=0,1,2,\dots 
\end{equation}

\begin{theorem}\label{thm:subcritical}
	Let $q\in(0,3)$ and $\tau\in\{132,321,213\}$. Then as $n\to\infty$ we have 
	\[
	\fp \Pi~\text{under}~\mathbb{P}_n^{q,\tau} \stackrel{d}{\to}\emph{\texttt{NegativeBinomial}}\left(2,1-\frac{q}{3}\right).
	\]
\end{theorem}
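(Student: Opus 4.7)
The plan is to combine the normalization identity \eqref{eq:normalization_id} with the key asymptotic lemma to be established in Section \ref{sec:normalization}, which together reduce the theorem to a short computation. Using \eqref{eq:normalization_id}, the probability generating function (pgf) of $\fp \Pi$ under $\mathbb{P}_n^{q,\tau}$ can be written, for any $z>0$, as
\[
\mathbb{E}_n^{q,\tau}\bigl[z^{\fp \Pi}\bigr] = \frac{\mathbb{E}_n^\tau\bigl[(qz)^{\fp \Pi}\bigr]}{\mathbb{E}_n^\tau\bigl[q^{\fp \Pi}\bigr]} = \frac{Z_n(qz,\tau)}{Z_n(q,\tau)},
\]
reducing everything to understanding the ratio of normalizing constants.

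The key lemma of Section \ref{sec:normalization} is expected to state, for each $\tau\in\{132,213,321\}$ and each real $w\in(0,3)$, that
\[
\frac{Z_n(w, \tau)}{|\mathcal{S}_n(\tau)|} \longrightarrow \frac{4}{(3-w)^2} \quad \text{as } n \to \infty.
\]
At $w=1$ both sides equal $1$, and the result matches the Miner--Pak limit that the number of fixed points under $\mathbb{P}_n^\tau$ converges to $\texttt{NegativeBinomial}(2,2/3)$, whose pgf is exactly $4/(3-w)^2$. Applying the key lemma to both numerator and denominator, with $w=qz$ (for real $z$ close enough to $1$ that $qz<3$) and $w=q$, gives
\[
\mathbb{E}_n^{q,\tau}\bigl[z^{\fp \Pi}\bigr] \longrightarrow \left(\frac{3-q}{3-qz}\right)^2 = \left(\frac{1-q/3}{1-(q/3)z}\right)^2,
\]
which is precisely the pgf of $\texttt{NegativeBinomial}(2,1-q/3)$ evaluated at $z$. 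Since the pgfs converge on a real neighborhood of $z=1$, the standard continuity theorem for pgfs of nonnegative integer-valued random variables yields convergence in distribution.

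The main obstacle is the key lemma itself, and I am not writing out its proof here. I expect it to be proved via singularity analysis applied to the bivariate generating function $\sum_n Z_n(w,\tau)x^n$, which is available for each $\tau\in\{132,213,321\}$ from the refined Catalan-type enumerations of fixed points in $132$-avoiders (with the analogous treatments for $213$ and $321$, which share the same fixed-point distribution). For $w\in(0,3)$ the dominant singularity in $x$ should remain the Catalan singularity at $x=1/4$ with a square-root branch, so that $Z_n(w,\tau)$ inherits the $4^n n^{-3/2}$ growth of $|\mathcal{S}_n(\tau)|$ with a $w$-dependent prefactor computable as $4/(3-w)^2$. The phase transition at $q=3$ foreshadowed in the next theorems corresponds precisely to the breakdown of this prefactor at $w=3$, beyond which a different singularity becomes dominant and a genuine rescaling becomes necessary.
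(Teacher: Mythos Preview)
Your proposal is correct and follows essentially the same route as the paper: express the pgf of $\fp\Pi$ under $\mathbb{P}_n^{q,\tau}$ as the ratio $Z_n(qz,\tau)/Z_n(q,\tau)$, invoke the key asymptotic $Z_n(w,\tau)/Z_n(1,\tau)\to 4/(3-w)^2$ for $w\in(0,3)$ from Lemma~\ref{lem:normalization} (proved exactly as you anticipate, via singularity analysis of Elizalde's generating function), and conclude by the continuity theorem for pgfs. The only cosmetic difference is that the paper takes $z\in(0,1)$ rather than a neighborhood of $1$, and cites Theorem IX.1 of \cite{Flajolet} for the pgf continuity argument.
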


The next result deals with the critical phase $q=3$, where we have convergence to the Rayleigh distribution after scaling by $1/\sqrt{n}$. Recall that for $\sigma>0$, a \texttt{Rayleigh}$(\sigma)$ random variable $R$ has a probability density
\begin{equation}\label{eq:Rayleigh_density}
	\mathbb{P}(R\in\D{x})=\begin{cases}
		\frac{x}{\sigma^2}e^{-\frac{x^2}{2\sigma^2}}\D{x}&\text{if }x\geq 0\\
		0&\text{if }x<0.
	\end{cases}
\end{equation}
\begin{theorem}\label{thm:Rayleigh}
	Let $\tau\in\{132,321,213\}$. Then as $n\to\infty$ we have
	\[
	\frac{\fp \Pi}{\sqrt{n}}~\text{under}~\mathbb{P}_n^{3,\tau} \stackrel{d}{\to}\emph{\texttt{Rayleigh}}\left(\frac{3}{\sqrt{2}}\right).
	\]
\end{theorem}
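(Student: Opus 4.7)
The plan is to exploit singularity analysis of the bivariate generating function $F_\tau(z,u):=\sum_{n\geq 0}Z_n(u,\tau)z^n$, together with the key lemma of Section \ref{sec:normalization}. It is a classical fact that $\fp$ is equidistributed on $\mathcal{S}_n(\tau)$ for each $\tau\in\{132,213,321\}$, so all three patterns share the common closed form
$$F_\tau(z,u)=\frac{2}{1+2z(1-u)+\sqrt{1-4z}}.$$
Writing $\zeta=1-4z$ and $v=(3-u)/2$, the denominator becomes $v+\sqrt{\zeta}+(u-1)\zeta/2$, which at the critical value $u=3$ factors as $\sqrt{\zeta}(1+\sqrt{\zeta})$. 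Hence $F_\tau(z,3)\sim 2/\sqrt{1-4z}$ near $z=1/4$, and the transfer theorem of singularity analysis gives $Z_n(3,\tau)\sim 2\cdot 4^n/\sqrt{\pi n}$.

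To identify the limiting distribution I would work with the Laplace transform. By \eqref{eq:normalization_id},
$$\mathbb{E}_n^{3,\tau}\bigl[e^{-\lambda\,\fp\Pi/\sqrt n}\bigr]=\frac{Z_n\bigl(3e^{-\lambda/\sqrt n},\tau\bigr)}{Z_n(3,\tau)},\qquad \lambda\geq 0.$$
Setting $u_n:=3e^{-\lambda/\sqrt n}$ produces $v_n\sim 3\lambda/(2\sqrt n)$, and on the critical scale $1-4z\asymp 1/n$ the denominator is well approximated by $v_n+\sqrt{1-4z}$. A Hankel-contour extraction---the standard critical composition-scheme computation, cf.\ Chapter IX of Flajolet--Sedgewick---then yields
$$Z_n(u_n,\tau)\sim \frac{2\cdot 4^n}{\sqrt{\pi n}}\,h\!\left(v_n\sqrt n\right),$$
where $h(x):=1-\sqrt\pi\,x\,e^{x^2}\,\mathrm{erfc}(x)$. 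Dividing by $Z_n(3,\tau)$ and using $v_n\sqrt n\to 3\lambda/2$, the Laplace transform converges to $h(3\lambda/2)=1-\tfrac{3\lambda\sqrt\pi}{2}\,e^{9\lambda^2/4}\,\mathrm{erfc}(3\lambda/2)$, which one verifies is exactly the Laplace transform of $\mathrm{Rayleigh}(3/\sqrt 2)$. Convergence in distribution then follows from continuity of the Laplace transform on the nonnegative reals.

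The principal obstacle is to carry out the singularity extraction uniformly for $u$ in an $O(1/\sqrt n)$ neighborhood of $3$, with enough precision to see the $\mathrm{erfc}$ correction rather than only the leading $n^{-1/2}$ power. This uniformity is exactly what the key lemma of Section \ref{sec:normalization} is designed to supply, so the bulk of the work concentrates on verifying that its hypotheses apply at the critical scaling, on controlling the error incurred by discarding the $(u-1)\zeta/2$ term in the denominator, and on tracking the precise constants to confirm that the Rayleigh scale parameter is $3/\sqrt 2$ rather than some other multiple.
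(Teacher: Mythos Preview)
Your Laplace-transform route is sound in principle and would yield the theorem, but it is \emph{not} the paper's approach, and your final paragraph misidentifies where the work lies.

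The paper proves the result by \emph{moment pumping}. Differentiating $G(z,q)$ $m$ times in $q$ and setting $q=3$ gives a generating function $G_m(z,3)$ for the weighted factorial moments $(\fp\sigma)_m\,3^{\fp\sigma}$, whose dominant singularity at $z=1/4$ is of type $(1-4z)^{-(m+1)/2}$. Transfer yields $\mathbb{E}_n^{3,\tau}[(\fp\Pi)_m]\sim 3^m\Gamma(m/2+1)\,n^{m/2}$; an induction upgrades factorial moments to ordinary moments; these are recognized as the moments of $\texttt{Rayleigh}(3/\sqrt{2})$, which is moment-determined. Crucially, every extraction is performed at the single fixed value $q=3$; no uniformity in $q$ is ever invoked. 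The paper does remark that an alternative proof via Drmota--Soria is available, and your sketch is in that spirit.

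The concrete gap in your proposal is the claim that ``the key lemma of Section~\ref{sec:normalization} is designed to supply'' the required uniformity. It is not. Lemma~\ref{lem:normalization} gives only \emph{pointwise} asymptotics for fixed $q$, in three disjoint regimes $q<3$, $q=3$, $q>3$, with no information on how they merge as $q\to 3$. For your argument you need the asymptotic of $[z^n]\,2/(v_n+\sqrt{1-4z}+O(1-4z))$ with $v_n\asymp n^{-1/2}$, uniformly enough to capture the $\mathrm{erfc}$ correction; that is precisely the delicate critical-composition-scheme computation, and it must be carried out from scratch or imported from a black-box theorem such as Drmota--Soria. Lemma~\ref{lem:normalization} handles only the denominator $Z_n(3,\tau)$ of your Laplace transform; the numerator $Z_n(u_n,\tau)$ is where all the real work sits, and nothing in Section~\ref{sec:normalization} addresses it.
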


In the supercritical phase $q>3$, we have convergence to a standard normal random variable after the appropriate centering and scaling.
\begin{theorem}\label{thm:supercritical}
	Let $q>3$ and $\tau\in\{132,321,213\}$. Then as $n\to\infty$ we have
	\[
	\frac{\fp \Pi-\frac{q(q-3)}{(q-1)(q-2)}n}{\sqrt{\frac{2q(2q-3)}{(q-1)^2(q-2)^2}n}}~\text{under}~\mathbb{P}_n^{q,\tau} \stackrel{d}{\to}\emph{\texttt{Normal}}(0,1).
	\]
\end{theorem}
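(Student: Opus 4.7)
The plan is to use the moment generating function approach, exploiting how the fixed point biased measure factorizes normalization constants via \eqref{eq:RN_avoiding} and \eqref{eq:normalization_id}. Writing $b_n$ and $a_n$ for the stated centering and variance, the MGF of the normalized random variable equals
\[
\mathbb{E}_n^{q,\tau}\!\left[e^{t(\fp\Pi - b_n)/\sqrt{a_n}}\right] = e^{-tb_n/\sqrt{a_n}} \cdot \frac{Z_n(qe^{t/\sqrt{a_n}},\tau)}{Z_n(q,\tau)}.
\]
Thus the theorem reduces to analyzing the ratio $Z_n(qe^{s},\tau)/Z_n(q,\tau)$ with $s=t/\sqrt{a_n}\to 0$ and showing that, combined with the centering factor $e^{-tb_n/\sqrt{a_n}}$, the limit is $e^{t^2/2}$.

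The key analytic input is an asymptotic expansion $Z_n(u,\tau)\sim C(u)\,n^{\gamma}\rho(u)^{-n}$, valid uniformly for $u$ in a complex neighborhood of each real $q_0>3$. For each pattern $\tau\in\{132,321,213\}$, the bivariate generating function $F_\tau(z,u):=\sum_{n\geq 0} z^n \sum_{\sigma\in\mathcal{S}_n(\tau)} u^{\fp\sigma}$ has a composition-scheme structure, and in the supercritical regime $u>3$ its dominant $z$-singularity comes from the outer factor and depends analytically on $u$. Flajolet--Sedgewick transfer theorems then yield the desired asymptotics, presumably packaged in the key lemma announced in Section~\ref{sec:normalization}. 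Polynomial factors $n^{\gamma}$ wash out in the ratio and need not be tracked with precision.

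Given this input, the ratio satisfies
\[
\frac{Z_n(qe^{s},\tau)}{Z_n(q,\tau)} \sim \frac{C(qe^{s})}{C(q)}\left(\frac{\rho(q)}{\rho(qe^{s})}\right)^{n}.
\]
Setting $\psi(s):=\log\rho(q)-\log\rho(qe^{s})$ and Taylor expanding at $s=0$, one obtains $n\psi(s) = n\psi'(0)s + \tfrac{1}{2}n\psi''(0)s^2 + O(ns^3)$, and with $s=t/\sqrt{a_n}$ the cubic remainder is $o(1)$. The centering $b_n$ is chosen so that $n\psi'(0)s - tb_n/\sqrt{a_n}\to 0$, reducing to the identity $-q\rho'(q)/\rho(q) = q(q-3)/\big((q-1)(q-2)\big)$; the scaling $a_n$ is chosen so that $\tfrac{1}{2}n\psi''(0)s^2 \to t^2/2$, a similar identity involving $\psi''(0)$. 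The prefactor ratio $C(qe^{s})/C(q)\to 1$ by continuity, so the MGF converges to $e^{t^2/2}$, and L\'evy's continuity theorem then delivers the CLT.

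The main obstacle is securing the asymptotic expansion of $Z_n(u,\tau)$ with sufficient uniformity in complex $u$. This requires knowing that the dominant $z$-singularity $\rho(u)$ of $F_\tau(z,u)$ is isolated, of a type amenable to singularity analysis (e.g.\ a simple pole or algebraic), and analytic in $u$ for $u$ near any real $q_0>3$. For each of the three patterns the bivariate generating function tracking fixed points is classical, so this should be tractable; once the Section~\ref{sec:normalization} lemma supplies these asymptotics, the verification of the two derivative identities for $\rho$ is a mechanical calculation.
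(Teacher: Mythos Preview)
Your approach is essentially the same as the paper's: both hinge on the fact that for $u$ near $q>3$ the dominant $z$-singularity of $G(z,u)$ is a simple pole at $\rho(u)=\frac{u-2}{(u-1)^2}$ depending analytically on $u$, and the CLT drops out of a second-order Taylor expansion of $\log\bigl(\rho(q)/\rho(qu)\bigr)$. The paper packages this by verifying the hypotheses of \cite[Theorem~IX.9]{Flajolet} (the meromorphic schema) for $F(z,u)=G(z,qu)$---writing $F=B/C$ with $C(z,u)=1+2(1-qu)z+\sqrt{1-4z}$, checking the nondegeneracy $\partial_zC(\rho,1)\cdot\partial_uC(\rho,1)\neq 0$, and computing $\mathfrak{m}$ and $\mathfrak{v}$---whereas you unpack that theorem's proof directly via the MGF.

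One caveat: Lemma~\ref{lem:normalization} as stated and proved gives only the \emph{pointwise} asymptotic $Z_n(q,\tau)\sim C(q)\rho(q)^{-n}$ for each fixed real $q>3$. Your argument requires this to hold \emph{uniformly} as $u=qe^{t/\sqrt{a_n}}$ varies with $n$, and pointwise asymptotics do not automatically transfer along moving sequences $u_n\to q$. You flag this as ``the main obstacle'' but then write ``once the Section~\ref{sec:normalization} lemma supplies these asymptotics''---it does not, and establishing the needed uniformity is precisely the content of the meromorphic-perturbation hypothesis in Theorem~IX.9. So to complete your route you would have to redo the singularity analysis with explicit uniform error bounds in a neighborhood of $q$; this is not hard here (the pole is simple and isolated from the branch point $\tfrac14$ when $q>3$), but it is real work that the paper avoids by citing the black-box theorem.
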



\section{Asymptotic growth of the normalization constant}\label{sec:normalization}

In this section we derive the asymptotic growth of the normalization constant $Z_n(q,\tau)$ as $n\to\infty$ when $\tau\in\{132,321,213\}$. Similarly to the limits obtained in Theorems \ref{thm:subcritical}, \ref{thm:Rayleigh}, and \ref{thm:supercritical}, the asymptotic behavior of the normalization constant depends qualitatively on the parameter $q$ and exhibits a phase transition at $q=3$. The proof of our result relies on a bivariate generating function derived by S.~Elizalde that counts $\tau$-avoiding permutations marked by their length and their number of fixed points. For any $k\geq 0$ and $n\geq 1$, define the nonnegative coefficients
\[
a_{k,n}:=\big|\{\sigma\in \mathcal{S}_n(\tau)|\fp \sigma=k\}\big|.
\]
Then Theorem 3.5 of \cite{Elizalde} implies that for any $\tau\in\{132,321,213\}$ we have
\begin{equation}\label{eq:gen_function}
	\begin{split}
		G(z,q):=1+\sum_{n=1}^\infty\sum_{\sigma\in\mathcal{S}_n(\tau)}q^{\fp \sigma}z^n&=1+\sum_{n=1}^\infty \sum_{k=0}^\infty a_{k,n}\,q^k z^n\\
		&=\frac{2}{1+2(1-q)z+\sqrt{1-4z}}.
	\end{split}
\end{equation}

In what follows, we use the symbol $\sim$ to denote asymptotic equivalence. That is, for two functions $f:\mathbb{N}\to\mathbb{R}$ and $g:\mathbb{N}\to\mathbb{R}$, we say $f\sim g$ as $n\to\infty$ if 
\[
\lim_{n\to\infty}\frac{f(n)}{g(n)}=1.
\]

\begin{lemma}\label{lem:normalization}
	Let $q>0$ and $\tau\in\{132,321,213\}$. Then as $n\to\infty$ we have
	\[
	Z_n(q,\tau)\sim
	\begin{cases}
		\displaystyle\frac{4}{(3-q)^2\sqrt{\pi}}n^{-\frac{3}{2}}4^n & \mathrm{if}~q<3\\ \\
		\displaystyle\frac{2}{\sqrt{\pi}}n^{-\frac{1}{2}}4^n & \mathrm{if}~q=3\\ \\
		\displaystyle\frac{(q-1)(q-3)}{(q-2)^2}\left(\frac{(q-1)^2}{q-2}\right)^n & \mathrm{if}~q>3.
	\end{cases}
	\]
\end{lemma}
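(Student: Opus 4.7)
The approach is classical singularity analysis applied to the bivariate generating function $G(z,q)$ in \eqref{eq:gen_function}, treating $q>0$ as a fixed parameter and extracting the $n$th coefficient in $z$. Since $G(z,q) = 1 + \sum_{n\geq 1} Z_n(q,\tau) z^n$, one has $Z_n(q,\tau) = [z^n] G(z,q)$ for $n \geq 1$, so the task reduces to determining the asymptotics of these coefficients.

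First I would locate the singularities of $z \mapsto G(z,q)$. There are two sources: the branch point at $z=1/4$ coming from $\sqrt{1-4z}$, and zeros of the denominator $D(z) := 1 + 2(1-q)z + \sqrt{1-4z}$. Isolating the radical and squaring turns $D(z) = 0$ into the polynomial equation $4z[(1-q)^2 z + (2-q)] = 0$, whose only nonzero root is $z^* := (q-2)/(q-1)^2$. A direct substitution gives $1 + 2(1-q)z^* = (3-q)/(q-1)$ and $\sqrt{1-4z^*} = |q-3|/|q-1|$, so $z^*$ is a genuine zero of $D$ only when the signs match, namely for $q \in (0,1) \cup (3,\infty)$. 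The algebraic identity $4(q-2) - (q-1)^2 = -(q-3)^2$ shows that $z^* \leq 1/4$ with equality exactly at $q=3$, and an elementary check rules out $|z^*| < 1/4$ in the case $q \in (0,1)$. Thus the dominant singularity of $G(\cdot,q)$ is $z=1/4$ for $q \leq 3$ and $z = z^* \in (0,1/4)$ for $q > 3$.

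Next I would carry out the local expansion at the dominant singularity in each regime. Taking $u := \sqrt{1-4z}$ as a local parameter, the denominator rewrites as $D(z) = (3-q)/2 + u + (q-1)u^2/2$. For $q < 3$, inverting this and discarding terms analytic at $z=1/4$ gives
\[
G(z,q) = \frac{4}{3-q} - \frac{8}{(3-q)^2}\sqrt{1-4z} + O(1-4z),
\]
and the transfer $[z^n]\sqrt{1-4z} \sim -4^n n^{-3/2}/(2\sqrt{\pi})$ yields the first case. For $q=3$ the constant term $(3-q)/2$ vanishes and $G(z,3) = 2/[u(1+u)] \sim 2/\sqrt{1-4z}$ as $z \to 1/4$, so $[z^n](1-4z)^{-1/2} \sim 4^n n^{-1/2}/\sqrt{\pi}$ delivers the second case. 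For $q > 3$, $G$ has a simple pole at $z^*$ with residue $2/D'(z^*)$, where $D'(z^*) = 2(1-q) - 2(q-1)/(q-3)$; this evaluates to $\operatorname{Res}_{z^*} G = (3-q)/[(q-1)(q-2)]$, and the standard extraction $[z^n] G(z,q) \sim -\operatorname{Res}_{z^*} G \cdot (z^*)^{-n-1}$ gives the third case after substituting $1/z^* = (q-1)^2/(q-2)$.

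The main technical point is checking the hypotheses of the transfer theorem, namely that $G(\cdot,q)$ extends analytically to a $\Delta$-domain at the dominant singularity with no other singularities on the corresponding circle. This reduces to the bookkeeping on zeros of $D$ already sketched: the squared equation has at most one nonzero root, so there are no hidden complex zeros, and the branch cut of $\sqrt{1-4z}$ lies strictly outside the relevant disk in the pole regime $q > 3$. I expect this case analysis, rather than any individual coefficient extraction, to be the most delicate step of the argument.
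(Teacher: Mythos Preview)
Your proposal is correct and follows essentially the same route as the paper: both identify $Z_n(q,\tau)=[z^n]G(z,q)$, locate the dominant singularity by analyzing when the denominator $1+2(1-q)z+\sqrt{1-4z}$ vanishes inside $|z|\le 1/4$, and then apply standard transfer theorems to the local expansion in each of the three regimes. Your treatment is in fact slightly more careful than the paper's in that you explicitly dispose of the spurious zero $z^*=(q-2)/(q-1)^2$ for $q\in(0,1)$ (where $|z^*|>1/4$), and your use of the uniformizing variable $u=\sqrt{1-4z}$ makes the local expansions tidier; otherwise the arguments are the same.
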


\begin{proof}[of Lemma \ref{lem:normalization}]
	From \eqref{eq:normalization} and \eqref{eq:gen_function}, we see that 
	\[
	G(z,q)=1+\sum_{n=1}^\infty Z_n(q,\tau) z^n.
	\]
	In particular,
	\begin{equation}\label{eq:normalization_coeff}
		Z_n(q,\tau)=[z^n]G(z,q),
	\end{equation}
	hence the asymptotic of $Z_n(q,\tau)$ can be obtained via singularity analysis of the generating function $G(z,q)$ \`{a} la Figure VI.7 of \cite{Flajolet}. Towards this end, we locate the singularities $\zeta\in\mathbb{C}$ of $z\mapsto G(z,q)$ that are closest to the origin, namely, the \emph{dominant singularities}. Examining the right-hand side of \eqref{eq:gen_function}, we see that there is always a branch point singularity at $\frac{1}{4}$. However, depending on the parameter $q$, there may be a pole singularity at some $\zeta$ with $|\zeta|\leq\frac{1}{4}$. More precisely, under the constraints $q>0$ and $|z|\leq\frac{1}{4}$, the equation 
	\[
	1+2(1-q)z+\sqrt{1-4z}=0
	\]
	has a solution only when $q\geq 3$, in which case the solution is $z=\frac{q-2}{(q-1)^2}$. Thus for $q\in (0,3]$, the dominant singularity is $\zeta=\frac{1}{4}$, while for $q>3$ it is $\zeta=\frac{q-2}{(q-1)^2}$.\\
	
	\noindent $\mathbf{q\in(0,3)}$ \textbf{case where} $\mathbf{\zeta=\frac{1}{4}}$
	
	In this case, $z\mapsto G(z,q)$ is analytic in $\mathbb{C}\setminus [\frac{1}{4},\infty)$ and has singular expansion 
	\begin{align*}
		G(z,q)&=\frac{4}{3-q}-\frac{16}{(3-q)^2}\sqrt{\zeta-z}+O(\zeta-z)~\mathrm{as}~z\to\zeta\\
		&=\frac{4}{3-q}-\frac{16\sqrt{\zeta}}{(3-q)^2}(1-z/\zeta)^\frac{1}{2}+O(1-z/\zeta)~\mathrm{as}~z\to\zeta.
	\end{align*}
	Hence it follows from Theorem VI.4 of \cite{Flajolet} that as $n\to\infty$ we have
	\begin{align*}
		[z^n]G(z,q)&\sim -\frac{16\sqrt{\zeta}}{(3-q)^2}\frac{n^{-\frac{1}{2}-1}}{\Gamma(-1/2)}\zeta^{-n}\\
		&=\frac{4}{(3-q)^2\sqrt{\pi}}n^{-\frac{3}{2}}4^n.
	\end{align*}
	
	\noindent $\mathbf{q=3}$ \textbf{case where} $\mathbf{\zeta=\frac{1}{4}}$
	
	In this case, $z\mapsto G(z,3)$ is analytic in $\mathbb{C}\setminus [\frac{1}{4},\infty)$ and has singular expansion
	\begin{align*}
		G(z,3)&=\frac{1}{\sqrt{\zeta-z}}+O(1)~\mathrm{as}~z\to\zeta\\
		&=\frac{1}{\sqrt{\zeta}}(1-z/\zeta)^{-\frac{1}{2}}+O(1)~\mathrm{as}~z\to\zeta.
	\end{align*}
	Hence it follows from Theorem VI.4 of \cite{Flajolet} that as $n\to\infty$ we have
	\begin{align*}
		[z^n]G(z,3)&\sim \frac{1}{\sqrt{\zeta}}\frac{n^{\frac{1}{2}-1}}{\Gamma(1/2)}\zeta^{-n}\\
		&=\frac{2}{\sqrt{\pi}}n^{-\frac{1}{2}}4^n.
	\end{align*}
	
	\noindent $\mathbf{q>3}$ \textbf{case where} $\mathbf{\zeta=\frac{q-2}{(q-1)^2}}$
	
	In this case, $z\mapsto G(z,q)$ is analytic in $\mathbb{C}\setminus [\frac{q-2}{(q-1)^2},\infty)$ and has singular expansion
	\begin{align*}
		G(z,q)&=\frac{q-3}{(q-1)(q-2)}\frac{1}{\zeta-z}+O(1)~\mathrm{as}~z\to\zeta\\
		&=\frac{q-3}{(q-1)(q-2)\zeta}(1-z/\zeta)^{-1}+O(1)~\mathrm{as}~z\to\zeta.
	\end{align*}
	Hence it follows from Theorem VI.4 of \cite{Flajolet} that as $n\to\infty$ we have
	\begin{align*}
		[z^n]G(z,q)&\sim \frac{q-3}{(q-1)(q-2)\zeta}\frac{n^{1-1}}{\Gamma(1)}\zeta^{-n}\\
		&=\frac{(q-1)(q-3)}{(q-2)^2}\left(\frac{(q-1)^2}{q-2}\right)^n.
	\end{align*}
\end{proof}


\section{Proofs of the main results}\label{sec:proofs}

Here we collect the proofs of our main results. Theorem \ref{thm:Bernoulli} is proved using an existing limit theorem for the unbiased distribution $\mathbb{P}^{123}_n$. The other theorems are proved using tools from analytic combinatorics such as singularity analysis of generating functions.

\subsection[Proof of Theorem 1]{Proof of Theorem \ref{thm:classic}}

We prove Theorem \ref{thm:classic} using the exponential generating function 
\begin{equation}\label{eq:egf}
	H(t,u):=1+\sum_{n=1}^\infty\sum_{\sigma\in \mathcal{S}_n}u^{\fp \sigma}\,\frac{t^n}{n!}=\frac{e^{t(u-1)}}{1-t};
\end{equation}
see \cite[Example 3.70]{permutation_text}. Notice that for $u=q>0$, the power series coefficients of $t^n$ in \eqref{eq:egf} are $Z_n(q)/n!$, where $Z_n(q)$ is the normalization constant defined in \eqref{eq:plain_normalization}. We will also need the well-known formula for the probability generating function of a \texttt{Poisson}$(\lambda)$ random variable $X$, namely, for any $z\in\mathbb{C}$ we have
\begin{equation}\label{eq:Poisson_pgf}
	\mathbb{E}\left[z^X\right]=e^{\lambda(z-1)}.
\end{equation}

\begin{proof}[of Theorem \ref{thm:classic}]
	Let $z\in\mathbb{C}$ and use \eqref{eq:biased} to write
	\begin{align}
		\mathbb{E}_n^q\left[z^{\fp \Pi}\right]&=\frac{1}{Z_n(q)}\sum_{\sigma\in\mathcal{S}_n}z^{\fp \sigma}q^{\fp \sigma} \nonumber \\
		&=\frac{[t^n]H(t,zq)}{[t^n]H(t,q)}.\label{eq:plain_pgf}
	\end{align}
	
	Next we need the asymptotic growth of the coefficients of $H(t,u)$. We can obtain this asymptotic by applying singularity analysis to the function $t\mapsto H(t,u)$, which from the right-hand side of \eqref{eq:egf}, is seen to be meromorphic on $\mathbb{C}$ with a single pole at $t=1$. Moreover, the singular expansion of $t\mapsto H(t,u)$ is
	\begin{align*}
		H(t,u)=e^{u-1}(1-t)^{-1}+O(1)~\mathrm{as}~t\to 1.
	\end{align*}
	Hence it follows from Theorem VI.4 of \cite{Flajolet} that as $n\to\infty$ we have
	\begin{align*}
		[t^n]H(t,u)&\sim e^{u-1}\frac{n^{1-1}}{\Gamma(1)}1^{-n}\\
		&=e^{u-1}.
	\end{align*}
	We can use this asymptotic along with \eqref{eq:plain_pgf} to write
	\begin{align*}
		\lim_{n\to\infty}\mathbb{E}_n^q\left[z^{\fp \Pi}\right]&=\frac{e^{zq-1}}{e^{q-1}}\\
		&=e^{q(z-1)}.
	\end{align*}
	
	Now the theorem follows from \eqref{eq:Poisson_pgf} and the fact that pointwise convergence of probability generating functions on a subset of $\mathbb{C}$ that contains a limit point in the interior of the unit disk implies convergence in distribution of the corresponding discrete random variables; see Theorem IX.1 of \cite{Flajolet} for a precise statement of this fact. 
\end{proof}

\subsection[Proof of Theorem 2]{Proof of Theorem \ref{thm:Bernoulli}}

Unlike the previous proof, a generating function analogous to \eqref{eq:egf} is unavailable, hence in this case we must resort to using an existing limit theorem for unbiased permutations. More specifically, $\fp\Pi$ under $\mathbb{P}_n^{123}$ converges in distribution to the sum of two independent \texttt{Bernoulli}$(\frac{1}{4})$ random variables as $n\to\infty$; see Theorem 1.1 of \cite{HRS_1}. We will also need the formula for the probability generating function of the sum of two independent \texttt{Bernoulli}$(p)$ random variables $A$ and $B$, that is, for any $z\in\mathbb{C}$ we have
\begin{align}
	\mathbb{E}\left[z^{A+B}\right]&=\Big(\mathbb{E}\left[z^A\right]\Big)^2\nonumber \\
	&=\big(1+p(z-1)\big)^2.\label{eq:Bernoulli_pgf}
\end{align}

\begin{proof}[of Theorem \ref{thm:Bernoulli}]
	Let $z\in\mathbb{C}$ and $A$ and $B$ be two independent \texttt{Bernoulli}$(\frac{1}{4})$ random variables. Since $x\mapsto q^x$ and $x\mapsto (zq)^x$ are both bounded continuous functions on $\{0,1,2\}$, and $\fp\Pi$ under $\mathbb{P}_n^{123}$ converges in distribution to the sum of two independent \texttt{Bernoulli}$(\frac{1}{4})$ random variables as $n\to\infty$, we can use the mutual absolute continuity relation \eqref{eq:RN_avoiding} along with \eqref{eq:Bernoulli_pgf} and the definition of convergence in distribution to write
	\begin{align*}
		\lim_{n\to\infty}\mathbb{E}^{q,123}_n\left[z^{\fp \Pi}\right]&=\lim_{n\to\infty}\frac{\mathbb{E}^{123}_n\left[z^{\fp \Pi}q^{\fp \Pi}\right]}{\mathbb{E}^{123}_n\left[q^{\fp \Pi}\right]}\\
		&=\frac{\mathbb{E}\left[(zq)^{A+B}\right]}{\mathbb{E}\left[q^{A+B}\right]}\\
		&=\frac{\left(1+\frac{1}{4}(zq-1)\right)^2}{\left(1+\frac{1}{4}(q-1)\right)^2}\\
		&=\left(1+\frac{q}{3+q}(z-1)\right)^2.
	\end{align*}
	
	Now the theorem follows from \eqref{eq:Bernoulli_pgf} and the fact that pointwise convergence of probability generating functions on a subset of $\mathbb{C}$ that contains a limit point in the interior of the unit disk implies convergence in distribution of the corresponding discrete random variables; see Theorem IX.1 of \cite{Flajolet} for a precise statement of this fact. 
\end{proof}

\subsection[Proofs of Theorems 3, 4, and 5]{Proofs of Theorems \ref{thm:subcritical}, \ref{thm:Rayleigh}, and \ref{thm:supercritical}}

When $\tau\in\{132,321,213\}$, the presence of a phase transition at $q=3$ requires three different proof techniques depending on whether $q<3$, $q=3$, or $q>3$.

\subsubsection[Subcritical phase q<3]{Subcritical phase $q<3$}

Our proof of Theorem \ref{thm:subcritical} is similar in spirit to that of Theorem \ref{thm:classic}, and uses the asymptotic of the normalization constant which we derived in Lemma \ref{lem:normalization}. We will also need the probability generating function of a \texttt{NegativeBinomial}$(2,p)$ random variable $N$ with $p\in (0,1)$. For any $z\in\mathbb{C}$ with $|z|<\frac{1}{1-p}$, we can use \eqref{eq:neg_binom} to write 
\begin{align}
	\mathbb{E}\left[z^N\right]&=\sum_{k=0}^\infty \binom{k+1}{k}(1-p)^k p^2 z^k\nonumber \\
	&=p^2\sum_{k=1}^\infty k\big((1-p)z\big)^{k-1}\nonumber \\
	&=\frac{p^2}{\big(1-(1-p)z\big)^2}.\label{eq:nbinom_pgf}
\end{align}

\begin{proof}[of Theorem \ref{thm:subcritical}]
	For any $u\in (0,3)$, we can use \eqref{eq:normalization_id} along with Lemma \ref{lem:normalization} to deduce that
	\begin{align}
		\lim_{n\to\infty}\mathbb{E}^{\tau}_n\left[u^{\fp \Pi}\right]&=\lim_{n\to\infty}\frac{Z_n(u,\tau)}{Z_n(1,\tau)}\nonumber \\
		&=\frac{4}{(3-u)^2}.\label{eq:E_partition}
	\end{align}
	
	Put $p=1-\frac{q}{3}\in (0,1)$. For any $z\in (0,1)$, we can use the mutual absolute continuity relation \eqref{eq:RN_avoiding} along with \eqref{eq:E_partition} to write
	\begin{align*}
		\lim_{n\to\infty}\mathbb{E}^{q,\tau}_n\left[z^{\fp \Pi}\right]&=\lim_{n\to\infty}\frac{\mathbb{E}^{\tau}_n\left[z^{\fp \Pi}q^{\fp \Pi}\right]}{\mathbb{E}^{\tau}_n\left[q^{\fp \Pi}\right]}\\
		&=\frac{(3-q)^2}{(3-zq)^2}\\
		&=\frac{p^2}{(1-(1-p)z)^2}.
	\end{align*}
	
	Now the theorem follows from \eqref{eq:nbinom_pgf} and the fact that pointwise convergence of probability generating functions on a subset of $\mathbb{C}$ that contains a limit point in the interior of the unit disk implies convergence in distribution of the corresponding discrete random variables; see Theorem IX.1 of \cite{Flajolet} for a precise statement of this fact. 
\end{proof}

\subsubsection[Critical phase q=3]{Critical phase $q=3$}

To prove Theorem \ref{thm:Rayleigh}, we employ the method of ``moment pumping'' to identify the limiting random variable via the sequence of its positive integer moments. Accordingly, we need to compute the moments of the Rayleigh distribution. If $R$ is a \texttt{Rayleigh}$(\sigma)$ random variable with $\sigma>0$ and if $p>-2$, then the density \eqref{eq:Rayleigh_density} along with the change of variables $x\mapsto \sigma\sqrt{2x}$ readily yield
\begin{align}
	\mathbb{E}\left[R^p\right]&=\int_0^\infty x^p \frac{x}{\sigma^2}e^{-\frac{x^2}{2\sigma^2}}\D{x}\nonumber \\
	&=\left(\sigma\sqrt{2}\right)^p \int_0^\infty x^\frac{p}{2} e^{-x}\D{x}\nonumber \\
	&=\left(\sigma\sqrt{2}\right)^p\Gamma(p/2+1).\label{eq:Rayleigh_moments}
\end{align}

We will also need the fact that the \texttt{Rayleigh}$(\sigma)$ distribution is determined by its positive integer moments. A sufficient condition for this is that 
\[
\frac{\mathbb{E}\left[R^{m+1}\right]}{\mathbb{E}\left[R^m\right]}=O\left((m+1)^2\right)~\mathrm{as}~m\to\infty;
\]
see Theorem 1 of \cite{moment_determinacy}. We can check that this condition holds by using \eqref{eq:Rayleigh_moments} along with Wendel's inequality \cite[Equation 7]{Wendel} to deduce that for all $m\geq 0$
\begin{align*}
	\frac{\mathbb{E}\left[R^{m+1}\right]}{\mathbb{E}\left[R^m\right]}&=\sigma\sqrt{2}\frac{\Gamma(\frac{m+2}{2}+\frac{1}{2})}{\Gamma(\frac{m+2}{2})}\\
	&\leq \sigma\sqrt{m+2}.
\end{align*}

As an alternative to moment pumping, we could instead prove Theorem \ref{thm:Rayleigh} by appealing to \cite[Theorem 1]{Drmota_Soria}. This result is similar to \cite[Theorem IX.9]{Flajolet}, which we use to prove Theorem \ref{thm:supercritical} below. Both results provide sufficient conditions on a bivariate generating function that ensure weak convergence of a sequence of renormalized discrete random variables whose probability mass functions are derived from the coefficients of the generating function.

\begin{proof}[of Theorem \ref{thm:Rayleigh}]
	For $m\in\mathbb{N}$, let $(x)_m$ denote the falling factorial, that is
	\[
	(x)_m:=\underbrace{x(x-1)(x-2)\cdots(x-m+1)}_{\displaystyle m~\mathrm{factors}}.
	\]
	Notice that by repeated differentiation of \eqref{eq:gen_function}, we have for any $m\in\mathbb{N}$ 
	\begin{align}
		G_m(z,q):&=\sum_{n=1}^\infty\sum_{\sigma\in\mathcal{S}_n(\tau)}(\fp\sigma)_m\, q^{\fp \sigma}z^n\label{eq:Gm_gen} \\
		&=q^m\left.\frac{\partial^m G(z,x)}{\partial x^m}\right|_{x=q}\nonumber \\
		&=\frac{2\,m!(2qz)^m }{\left(1+2(1-q)z+\sqrt{1-4z}\right)^{m+1}}.\label{eq:diff_gen}
	\end{align}
	
	Similarly to the case of $G(z,3)$ covered in Section \ref{sec:normalization}, we see from \eqref{eq:diff_gen} that $z\mapsto G_m(z,3)$ has a single dominant singularity at $\zeta=\frac{1}{4}$, is analytic in $\mathbb{C}\setminus [\frac{1}{4},\infty)$, and has singular expansion
	\begin{align*}
		G_m(z,3)&=2\,m!\left(\frac{3}{2}\right)^m \frac{1}{2^{m+1}}\frac{1}{(\zeta-z)^\frac{m+1}{2}}+O\left(\frac{1}{(\zeta-z)^\frac{m}{2}}\right)~\mathrm{as}~z\to\zeta\\
		&=m!\left(\frac{3}{4}\right)^m \frac{1}{\zeta^\frac{m+1}{2}}(1-z/\zeta)^{-\frac{m+1}{2}}+O\left((1-z/\zeta)^{-\frac{m}{2}}\right)~\mathrm{as}~z\to\zeta.
	\end{align*}
	Hence it follows from Theorem VI.4 of \cite{Flajolet} that as $n\to\infty$ we have
	\begin{align}
		[z^n]G_m(z,3)&\sim m!\left(\frac{3}{4}\right)^m \frac{1}{\zeta^\frac{m+1}{2}}\frac{n^{\frac{m+1}{2}-1}}{\Gamma\left(\frac{m+1}{2}\right)}\zeta^{-n}\nonumber \\
		&=2\left(\frac{3}{2}\right)^m\frac{\Gamma(m+1)}{\Gamma\left(\frac{m+1}{2}\right)}n^{\frac{m-1}{2}}4^n\nonumber \\
		&=\frac{2}{\sqrt{\pi}}3^m \Gamma(m/2+1)n^{\frac{m-1}{2}}4^n,\label{eq:Gm_asymptotic}
	\end{align}
	where the last equality follows from the Legendre duplication formula.

	Let $X_n$ be the number of fixed points in an $\mathcal{S}_n(\tau)$-valued random permutation distributed according to $\mathbb{P}_n^{3,\tau}$. We start by computing the asymptotics of the \emph{factorial moments} of $X_n$. For any $m\in\mathbb{N}$, we can use \eqref{eq:biased_avoiding}, \eqref{eq:Gm_gen}, and \eqref{eq:normalization_coeff} to write
	
	\begin{align*}
		\mathbb{E}\left[(X_n)_m\right]=\mathbb{E}_n^{3,\tau}\left[(\fp\Pi)_m\right]&=\frac{1}{Z_n(3,\tau)}\sum_{\sigma\in\mathcal{S}_n(\tau)}(\fp\sigma)_m\, 3^{\fp\sigma}\\
		&=\frac{[z^n]G_m(z,3)}{Z_n(3,\tau)}.
	\end{align*}
	Hence it follows from Lemma \ref{lem:normalization} and \eqref{eq:Gm_asymptotic} that 
	\begin{equation}\label{eq:factorial_asymptotic}
		\mathbb{E}\left[(X_n)_m\right]\sim 3^m \Gamma(m/2+1)n^\frac{m}{2}~\mathrm{as}~n\to\infty.
	\end{equation}
	
	We can deduce the asymptotics of the positive integer moments of $X_n$ from \eqref{eq:factorial_asymptotic} by arguing inductively. Note that it follows immediately from \eqref{eq:factorial_asymptotic} that 
	\[
	\mathbb{E}\left[X_n\right]=\mathbb{E}\left[(X_n)_1\right]\sim 3\, \Gamma(1/2+1)n^\frac{1}{2}~\mathrm{as}~n\to\infty.
	\]
	For any $m\in\mathbb{N}$, assume that for all $1\leq j\leq m$ we have
	\begin{equation}\label{eq:induction_assumption}
		\mathbb{E}\left[X_n^j\right]\sim 3^j \Gamma(j/2+1)n^\frac{j}{2}~\mathrm{as}~n\to\infty.
	\end{equation}
	Then by expanding $(X_n)_{m+1}$ into a polynomial and using \eqref{eq:induction_assumption}, we can write
	\begin{equation}\label{eq:comp_asymptotic}
		\mathbb{E}\left[(X_n)_{m+1}\right]= \mathbb{E}\left[X_n^{m+1}\right]+O(n^\frac{m}{2})\mathrm{as}~n\to\infty.
	\end{equation}
	Now the asymptotics \eqref{eq:factorial_asymptotic} and \eqref{eq:comp_asymptotic} together imply that
	\begin{equation*}
		\mathbb{E}\left[X_n^{m+1}\right]\sim 3^{m+1} \Gamma\big((m+1)/2+1\big)n^\frac{m+1}{2}~\mathrm{as}~n\to\infty.
	\end{equation*}
	It follows inductively that $\mathbb{E}[(X_n)_m]$ and $\mathbb{E}[X_n^m]$ have the same asymptotic for all $m\in\mathbb{N}$. In particular, this allows us to use \eqref{eq:factorial_asymptotic} to conclude that 
	\begin{align*}
		\lim_{n\to\infty}\mathbb{E}_n^{3,\tau}\left[\left(\frac{\fp\Pi}{\sqrt{n}}\right)^m\right]&=\lim_{n\to\infty}\mathbb{E}\left[\left(\frac{X_n}{\sqrt{n}}\right)^m\right]\\
		&=3^m \Gamma(m/2+1).
	\end{align*}
	
	Since these match the positive integer moments of the \texttt{Rayleigh}$(\frac{3}{\sqrt{2}})$ distribution computed in \eqref{eq:Rayleigh_moments}, and that distribution is determined by its positive integer moments, it follows from the moment convergence theorem \cite[Theorem C.2]{Flajolet} that $\frac{\fp \Pi}{\sqrt{n}}$ under $\mathbb{P}_n^{3,\tau}$ converges in distribution to a \texttt{Rayleigh}$(\frac{3}{\sqrt{2}})$ random variable.
\end{proof}

\subsubsection[Supercritical phase q>3]{Supercritical phase $q>3$}

To prove Theorem \ref{thm:supercritical}, we use Theorem IX.9 of \cite{Flajolet}. We state this result below for the convenience of the reader. It requires the following notations: given a function $f:\mathbb{C}\to\mathbb{C}$ that is analytic at $1$ and assumed to satisfy $f(1)\neq 0$, we set
\begin{equation}\label{eq:mean_var}
	\mathfrak{m}(f)=\frac{f'(1)}{f(1)},\hspace{6mm}\mathfrak{v}(f)=\frac{f''(1)}{f(1)}+\frac{f'(1)}{f(1)}-\left(\frac{f'(1)}{f(1)}\right)^2.
\end{equation}

\begin{theorem}{\cite[Theorem IX.9]{Flajolet}}\label{thm:Flajolet}
	Let $F(z,u)$ be a function that is bivariate analytic at $(z,u)=(0,0)$ and has nonnegative coefficients. Assume that $z\mapsto F(z,1)$ is meromorphic in $|z|\leq r$ with only a simple pole at $z=\rho$ for some positive $\rho<r$. Assume also the following conditions.
	
	\begin{enumerate}
		
		\item [i)]\emph{\textbf{Meromorphic perturbation:}} there exists $\epsilon>0$ and $r>\rho$ such that in the domain\\ $\mathcal{D}=\{|z|\leq r\}\times \{|u-1|<\epsilon\}$, the function $F(z,u)$ admits the representation 
		\[
		F(z,u)=\frac{B(z,u)}{C(z,u)},
		\]
		where $B(z,u)$, $C(z,u)$ are analytic for $(z,u)\in\mathcal{D}$ with $B(\rho,1)\neq 0$. \big(Thus $\rho$ is a simple zero of $C(z,1)$.\big)
		
		\item [ii)]\emph{\textbf{Nondegeneracy:}} one has $\partial_z C(\rho,1)\cdot\partial_u C(\rho,1)\neq 0$, ensuring the existence of a nonconstant $\rho(u)$ analytic at $u=1$, such that $C(\rho(u),u)=0$ and $\rho(1)=\rho$.
		
		\item [iii)]\emph{\textbf{Variability:}} one has
		\[
		\mathfrak{v}\left(\frac{\rho(1)}{\rho(u)}\right)\neq 0.
		\]
		
	\end{enumerate}
	Then, the random variable $X_n$ with probability generating function 
	\[
	p_n(u)=\frac{[z^n]F(z,u)}{[z^n]F(z,1)}
	\]
	after standardization, converges in distribution to a Gaussian variable, with a speed of convergence that is $O(n^{-1/2})$. Moreover, as $n\to\infty$, the mean and variance of $X_n$ are of the form
	\[
	\mathbb{E}[X_n]=\mathfrak{m}\left(\frac{\rho(1)}{\rho(u)}\right)n+O(1),\hspace{6mm} \Var(X_n)=\mathfrak{v}\left(\frac{\rho(1)}{\rho(u)}\right)n+O(1).
	\]
\end{theorem}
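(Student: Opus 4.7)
The plan is to reduce Theorem \ref{thm:Flajolet} to Hwang's \emph{quasi-powers theorem} (Theorem IX.8 of \cite{Flajolet}) by showing that the pgf $p_n(u)$ admits, uniformly for $u$ in a complex neighborhood of $1$, a quasi-powers expansion of the form $p_n(u)=A(u)B(u)^n\bigl(1+O(\delta^n)\bigr)$ with $A$ and $B$ analytic at $1$, $A(1)=B(1)=1$, and $B(u)=\rho(1)/\rho(u)$. Once this is in place, the conclusion (Gaussian limit after standardization with Berry--Esseen rate $O(n^{-1/2})$ and the stated expressions for the mean and variance) is exactly the content of Hwang's theorem applied to this expansion.

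The first step is to construct $\rho(u)$. Since $C(\rho,1)=0$ and the nondegeneracy hypothesis supplies $\partial_z C(\rho,1)\neq 0$, the analytic implicit function theorem yields a neighborhood $U$ of $u=1$ and a unique analytic $\rho:U\to\mathbb{C}$ with $C(\rho(u),u)\equiv 0$ and $\rho(1)=\rho$; the hypothesis $\partial_u C(\rho,1)\neq 0$ then shows $\rho'(1)\neq 0$, so $\rho(u)$ is nonconstant. By continuity, after shrinking $U$ I can arrange that there exists $r'$ with $\rho<r'<r$ such that, for every $u\in U$, the point $\rho(u)$ is the unique zero of $z\mapsto C(z,u)$ in $|z|\leq r'$ and it remains a simple zero. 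Here it is essential that $F(z,1)$ was assumed meromorphic in $|z|\leq r$ with $\rho$ as its only pole, so by a compactness/continuity argument no new singularity can creep into $|z|\leq r'$ from a uniformly bounded $u$-perturbation.

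The second step is a uniform meromorphic singularity analysis. On $U\times\{|z|\leq r'\}$, write
\[
F(z,u)=\frac{\alpha(u)}{\rho(u)-z}+R(z,u),\qquad \alpha(u):=\frac{B(\rho(u),u)}{\partial_z C(\rho(u),u)},
\]
where $\alpha$ is analytic with $\alpha(1)\neq 0$ by the condition $B(\rho,1)\neq 0$, and $R(z,u)$ is bivariate analytic on the bidisk, hence bounded on $|z|=r'$ uniformly for $u$ in a slightly smaller neighborhood $U'$. Extracting coefficients and applying Cauchy's estimate to $R$ gives
\[
[z^n]F(z,u)=\alpha(u)\,\rho(u)^{-(n+1)}+O\bigl((r')^{-n}\bigr)
\]
uniformly in $u\in U'$. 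Forming the ratio $p_n(u)=[z^n]F(z,u)/[z^n]F(z,1)$, the $O$ terms get absorbed into a multiplicative $(1+O(\delta^n))$ factor with $\delta=\rho/r'<1$, and one is left with the clean quasi-powers form $p_n(u)=A(u)B(u)^n(1+O(\delta^n))$, where $B(u)=\rho(1)/\rho(u)$ and $A(u)=\tfrac{\alpha(u)\rho(1)}{\alpha(1)\rho(u)}$.

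Finally, the variability hypothesis $\mathfrak{v}(\rho(1)/\rho(u))\neq 0$ is precisely the nondegeneracy of the second cumulant of $B$ at $u=1$ required by Hwang's theorem, so that theorem applies verbatim and produces both the CLT with rate $O(n^{-1/2})$ and the first-order asymptotics $\mathbb{E}[X_n]=\mathfrak{m}(B)n+O(1)$ and $\Var(X_n)=\mathfrak{v}(B)n+O(1)$. The main obstacle is the uniformity in the second step: one must justify that the simple-pole structure of $F(z,1)$ persists on $|z|\leq r'$ for all $u$ in a fixed complex neighborhood of $1$, with uniform control of the analytic remainder $R(z,u)$. Once that uniformity is in hand, the rest is bookkeeping and a direct invocation of the quasi-powers theorem.
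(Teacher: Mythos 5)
The paper states this result as a direct quotation of \cite[Theorem IX.9]{Flajolet} and supplies no proof of its own, so the relevant comparison is with the proof in that reference, which proceeds exactly as you propose: reduce to Hwang's quasi-powers theorem (Theorem IX.8 of \cite{Flajolet}) by establishing a uniform quasi-powers expansion of $p_n(u)$ with $B(u)=\rho(1)/\rho(u)$. Your outline is correct, and the uniformity issue you flag is resolved exactly as you describe via the implicit function theorem, a Rouch\'{e}/Hurwitz continuity argument keeping $\rho(u)$ the unique pole in $|z|\le r'$, and Cauchy estimates on the analytic remainder. One cosmetic slip: in the decomposition $F(z,u)=\alpha(u)/(\rho(u)-z)+R(z,u)$, matching residues at $z=\rho(u)$ forces $\alpha(u)=-B(\rho(u),u)/\partial_z C(\rho(u),u)$ rather than $+B(\rho(u),u)/\partial_z C(\rho(u),u)$; the sign cancels in the ratio $p_n(u)$ and so does not affect the conclusion.
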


\begin{proof}[of Theorem \ref{thm:supercritical}]
	Let $X_n$ be the number of fixed points in an $\mathcal{S}_n(\tau)$-valued random permutation distributed according to $\mathbb{P}_n^{q,\tau}$. Then for any $u\in\mathbb{C}$, we can use the mutual absolute continuity relation \eqref{eq:RN_avoiding} along with \eqref{eq:normalization_id} and \eqref{eq:normalization_coeff} to write
	\begin{align}
		\mathbb{E}\left[u^{X_n}\right]=\mathbb{E}_n^{q,\tau}\left[u^{\fp \Pi}\right]&=\frac{\mathbb{E}^{\tau}_n\left[(qu)^{\fp \Pi}\right]}{\mathbb{E}^{\tau}_n\left[q^{\fp \Pi}\right]}\nonumber \\
		&=\left.\frac{Z_n(qu,\tau)}{Z_n(1,\tau)}\middle/ \frac{Z_n(q,\tau)}{Z_n(1,\tau)}\right.\nonumber\\
		&=\frac{[z^n]G(z,qu)}{[z^n]G(z,q)},\label{eq:super_pgf}
	\end{align}
	where $G(z,u)$ is the bivariate generating function that was defined in \eqref{eq:gen_function}. Next we define $F(z,u):=G(z,qu)$. Then it is evident from \eqref{eq:gen_function} that
	\begin{equation}\label{eq:F_gen}
		\begin{split}
			F(z,u)&=1+\sum_{n=1}^\infty \sum_{k=0}^\infty q^k a_{k,n}\,u^k z^n\\
			&=\frac{2}{1+2(1-qu)z+\sqrt{1-4z}}.
		\end{split}
	\end{equation}
	Now it follows from \eqref{eq:super_pgf} that the probability generating function of $X_n$ is given by
	\[
	\mathbb{E}\left[u^{X_n}\right]=\frac{[z^n]F(z,u)}{[z^n]F(z,1)}.
	\]
	
	In order to invoke Theorem \ref{thm:Flajolet} and conclude that $X_n$ converges to a normal random variable after standardization, we must check that the conditions on $F(z,u)$ are satisfied. Towards this end, note that it follows from \eqref{eq:F_gen} that $F(z,u)$ is analytic at $(0,0)$ and has nonnegative coefficients. Moreover, during the proof of the $q>3$ case of Lemma \ref{lem:normalization}, it was established that $z\mapsto F(z,1)$ is meromorphic in $|z|\leq \frac{1}{4}$ with only a simple pole at $\frac{q-2}{(q-1)^2}\in (0,\frac{1}{4})$. \\
	
	\noindent \textbf{Condition \emph{i)}}
	
	Put $B(z,u)\equiv 2$ and $C(z,u)=1+2(1-qu)z+\sqrt{1-4z}$. Then the required representation holds in $\mathcal{D}$ with any $\epsilon>0$, $r=\frac{1}{4}$, and $\rho=\frac{q-2}{(q-1)^2}$. \\
	
	\noindent \textbf{Condition \emph{ii)}}
	
	By routine calculations, we deduce that
	\begin{align*}
		\partial_z C(\rho,1)\cdot\partial_u C(\rho,1)&=\left(2(1-q)-\frac{2}{\sqrt{1-4\rho}}\right)\cdot -2q\rho\\
		&=-2\frac{(q-1)(q-2)}{q-3}\cdot -2q\frac{q-2}{(q-1)^2}\\
		&\neq 0.
	\end{align*}
	In particular, for $|u-1|<1-\frac{3}{q}$ we have $\rho(u)=\frac{qu-2}{(qu-1)^2}$.\\
	
	\noindent \textbf{Condition \emph{iii)}}
	
	With $\rho(u)=\frac{qu-2}{(qu-1)^2}$, it is straightforward to use \eqref{eq:mean_var} to check that 
	\[
	\mathfrak{v}\left(\frac{\rho(1)}{\rho(u)}\right)=\frac{2q(2q-3)}{(q-1)^2(q-2)^2}\neq 0.
	\]
	
	Now that the conditions have been verified, we can conclude that as $n\to\infty$ 
	\[
	\mathbb{E}[X_n]=\frac{q(q-3)}{(q-1)(q-2)}n+O(1),\hspace{6mm} \Var(X_n)=\frac{2q(2q-3)}{(q-1)^2(q-2)^2}n+O(1),
	\]
	and
	\[
	\frac{\displaystyle X_n-\frac{q(q-3)}{(q-1)(q-2)}n}{\displaystyle\sqrt{\frac{2q(2q-3)}{(q-1)^2(q-2)^2}n}}\stackrel{d}{\to}\texttt{Normal}(0,1).
	\]
\end{proof}


\section{Future directions}\label{sec:future}

Here we discuss a few open questions and suggest topics for further study.

\subsection[231 or 312 avoiding case]{$\tau\in\{231,312\}$ case}\label{sec:231_312}

When proving our limit theorems for the number of fixed points occurring in a random permutation distributed according to $\mathbb{P}_n^{q,\tau}$, we relied on either an existing limit theorem for the unbiased distribution $\mathbb{P}_n^\tau$, or an explicit bivariate generating function that counts $\tau$-avoiding permutations marked by their length and their number of fixed points. The first approach no longer works in the $\tau\in\{231,312\}$ case because the existing limit theorem for $\mathbb{P}_n^\tau$ requires scaling; see Theorem 1.1 of \cite{HRS_1}. Moreover, the second approach fails because the generating function equivalent to \eqref{eq:gen_function} is known only in the form of a continued fraction which is not amenable to singularity analysis when $q\neq 1$; see Theorem 3.7 of \cite{Elizalde}.

Even if proving a limit theorem for the number of fixed points under $\mathbb{P}_n^{q,\tau}$ is out of reach in this case, it would be interesting to see if there is a phase transition similar to what occurs in the $132$, $321$, or $213$ avoiding case. We briefly sketch an heuristic argument for why such a phase transition should occur. Suppose that $\sigma\in\{231,312\}$ and $\tau\in\{132,321,213\}$. Since avoiding a pattern that has zero fixed points should result in a permutation that has more fixed points than is typical, and since $\sigma$ has zero fixed points while $\tau$ has one fixed point, a uniformly random permutation that avoids $\sigma$ should have, on average, more fixed points than a uniformly random permutation that avoids $\tau$. Indeed, this is the case as the average number of fixed points under $\mathbb{P}_n^\sigma$ is $\Theta(n^\frac{1}{4})$ while under $\mathbb{P}_n^\tau$ it is $\Theta(1)$; see \cite[Theorem 1.1]{HRS_1} and Theorem \ref{thm:subcritical}, respectively. This domination should also hold when $q>1$. In light of Theorem \ref{thm:supercritical}, this would imply that the asymptotic of $\mathbb{E}_n^{q,\sigma}[\fp \Pi]$ must transition from $\Theta(n^\frac{1}{4})$ for $q=1$ to $\Theta(n)$ for $q>3$.

\subsection[Other statistics of fix point biased permutations]{Other statistics under $\mathbb{P}_n^q$ and $\mathbb{P}_n^{q,\tau}$}

Determining the limiting distribution of fixed points under $\mathbb{P}_n^q$ and $\mathbb{P}_n^{q,\tau}$ offers only a small picture of the behavior of the whole permutation as $n\to\infty$. We can shed more light on what is happening by looking at additional permutation statistics. It turns out that in some cases, multivariate generating functions are available which count fixed points jointly with excedances and descents; see \cite{Elizalde}. When paired with the techniques used in the present paper, these generating functions should yield the limiting joint distribution of the corresponding statistics.

In contrast to this piecemeal approach, the \emph{permuton} is a more holistic way of characterizing the limiting behavior of a random permutation. This probability measure on the unit square with uniform marginals encodes many of the salient features of the random permutation; see \cite{Mukherjee, permuton} and references therein. The Mallows and record biased distributions are two examples similar to the fixed point biased distribution where the corresponding permuton has been calculated; see \cite{Starr} and \cite{record_biased2}, respectively. It would be interesting to see if the methods used in those papers can be utilized to calculate the permuton of the fixed point biased distribution.


\acknowledgements
\label{sec:ack}
The second author would like to thank Michael Howes for pointing out the relevance of the Mallows model with Hamming distance and for suggesting the reference \cite{Mallows_Hamming}.

\bibliographystyle{plain}
\bibliography{permutations_bib}
\label{sec:biblio}

\end{document}